\newtheorem{pro}{Proposition}[section]
\newtheorem{thm}[pro]{Theorem}
\newtheorem{lem}[pro]{Lemma}
\newtheorem{cor}[pro]{Corollary}
\newtheorem{quest}[pro]{Question}
\theoremstyle{definition}
\newtheorem{dfn}[pro]{Definition}
\theoremstyle{remark}
\newcommand{\VV}{\mathcal V}
\newcommand{\WW}{\mathcal W}
\newcommand{\bdy}{\partial}
\newcommand{\thick}[1]{{\rm Thick}(#1)}
\newcommand{\thin}[1]{{\rm Thin}(#1)}
\newcommand{\amlg}[1]{\mathcal A(#1)}
\title{Heegaard splittings of sufficiently complicated 3-manifolds II: Amalgamation} 
\date{\today}
\address{Pitzer College}
\email{bachman@pitzer.edu}
\author{David Bachman}
\begin{document}

\begin{abstract}
Let $M_1$ and $M_2$ be compact, orientable 3-manifolds, and $M$ the manifold obtained by gluing some component $F$ of $\bdy M_1$ to some component of $\bdy M_2$ by a homeomorphism $\phi$. We show that when $\phi$ is ``sufficiently complicated" then (1)  the amalgamation of low genus, unstabilized, boundary-unstabilized Heegaard splittings of $M_i$ is an unstabilized splitting of $M$, (2) every low genus, unstabilized Heegaard splitting of $M$ can be expressed as an amalgamation of unstabilized, boundary-unstabilized splittings of $M_i$, and possibly a Type II splitting of $F \times I$, and (3) if there is no Type II splitting in such an expression then it is unique.  
\end{abstract}

\maketitle

\markright{SPLITTINGS OF SUFFICIENTLY COMPLICATED 3-MANIFOLDS II}

\section{Introduction.}

Given a Heegaard surface in a 3-manifold $M$ one can {\it stabilize} to obtain a splitting of higher genus by taking the connected sum with the genus one splitting of $S^3$. Thus, to understand the set of all splittings of $M$ one should begin with the unstabilized ones. When $M$ is obtained by gluing two other 3-manifolds, $M_1$ and $M_2$, along their boundaries, then an important question is to determine the extent to which unstabilized splittings of $M_1$ and $M_2$ determine the unstabilized splittings of $M$. For example, in Problem 3.91 of \cite{kirby:97}, Cameron Gordon conjectured that the connected sum of  unstabilized Heegaard splittings is unstabilized. This was proved by the author in \cite{gordon}, and by Scharlemann and Qiu in \cite{ScharlemannQiu}. 

Given Heegaard splittings $H_i \subset M_i$, Schultens gave a construction of a Heegaard splitting of $M$, called their {\it amalgamation} \cite{schultens:93}. Using this terminology, we can phrase the higher genus analogue of Gordon's conjecture:

\begin{quest}
\label{c:IsotopyConjecture}
Let $M_1$ and $M_2$ denote compact, orientable, irreducible 3-manifolds with homeomorphic, incompressible boundary. Let $M$ be the 3-manifold obtained from $M_1$ and $M_2$ by gluing their boundaries by some homeomorphism. Let $H_i$ be an unstabilized Heegaard splitting of $M_i$. Is the amalgamation of $H_1$ and $H_2$ in $M$ unstabilized? 
\end{quest}

As stated, Schultens and Weidmann have shown the answer to this question is no \cite{SchultensWeidmann}. In light of their examples we refine the question by adding the hypothesis that the gluing map between $M_1$ and $M_2$ is  ``sufficiently complicated," in some suitable sense. We will postpone a precise definition of this term to Section \ref{s:BarrierSurfaces}. However, throughout this paper it will be used in such a way so that if $\psi:T^1 \to T^2$ is a fixed homeomorphism, then for each Anosov map $\phi:T^2 \to T^2$, there exists an $N$ so that for each $n \ge N$, $\psi^{-1}\phi^n \psi$ is sufficiently complicated. Unfortunately, the assumption that the gluing map of Question \ref{c:IsotopyConjecture} is sufficiently complicated is still not a strong enough hypothesis to insure the answer is yes, as the following construction shows.

If $\bdy M \ne \emptyset$, then one can {\it boundary-stabilize} a Heegaard splitting of $M$ by tubing a copy of a component of $\bdy M$ to it \cite{moriah:02}.  Let $M_1$ be a manifold that has a boundary component $F$, and an unstabilized Heegaard splitting $H_1$ that has been obtained by boundary-stabilizing some other splitting along $F$. (See \cite{sedgwick:01} or \cite{ms:04} for such examples.) Let $M_2$ be a manifold with a boundary component homeomorphic to $F$, and a {\it $\gamma$-primitive} Heegaard splitting (see \cite{moriah:02}). Such a Heegaard splitting is unstabilized, but has the property that boundary-stabilizing it along $F$ produces a stabilized splitting. Then no matter how we glue $M_1$ to $M_2$ along $F$, the amalgamation of $H_1$ and $H_2$ will be stabilized. 

Given this example, and those of Schultens and Weidmann, we deduce the following: In order for the answer to Question \ref{c:IsotopyConjecture} to be yes, we would at least have to know that $H_1$ and $H_2$ are not stabilized, not boundary-stabilized, and that the gluing map is sufficiently complicated. Our main result is that these hypotheses are enough to obtain the desired result:

\medskip

\noindent {\bf Theorem \ref{t:HigherGenusGordon}. }{\it
Let $M_1$ and $M_2$ be compact, orientable, irreducible 3-manifolds with incompressible boundary, neither of which is an $I$-bundle. Let $M$ denote the manifold obtained by gluing some component $F$ of $\bdy M_1$ to some component of $\bdy M_2$ by some homeomorphism $\phi$. Let $H_i$ be an unstabilized, boundary-unstabilized Heegaard splitting of $M_i$. If $\phi$ is sufficiently complicated then the amalgamation of $H_1$ and $H_2$ in $M$ is unstabilized.}

\medskip

This result allows us to construct the first example of a non-minimal genus Heegaard splitting which has Hempel distance \cite{hempel:01} exactly one.  The first examples of minimal genus, distance one Heegaard splittings were found by Lustig and Moriah in 1999  \cite{moriah:99}. Since then the existence of non-minimal genus examples was expected, but a construction remained elusive. This is why Moriah has called the search for such examples the ``nemesis of Heegaard splittings" \cite{moriah}. In some sense they are the last form of Heegaard splitting to be found.  In Corollary \ref{c:Moriah} we produce manifolds that have an arbitrarily large number of such splittings.

The conclusion of Theorem \ref{t:HigherGenusGordon} asserts that each pair of low genus, unstabilized, boundary-unstabilized splittings of $M_1$ and $M_2$ determines an unstabilized splitting of $M_1 \cup _\phi M_2$. We now discuss the converse of this statement. Lackenby \cite{lackenby:04}, Souto \cite{souto}, and Li \cite{li:08} have independently shown that when $\phi$ is sufficiently complicated, then any low genus Heegaard splitting $H$ of $M_1 \cup _\phi M_2$ is an amalgamation of splittings $H_i$ of $M_i$. In Theorem \ref{t:AmalgamationExists} we prove a refinement of this result:

\medskip

\noindent {\bf Theorem \ref{t:AmalgamationExists}. }{\it
Let $M_1$ and $M_2$ be compact, orientable, irreducible 3-manifolds with incompressible boundary, neither of which is an $I$-bundle. Let $M$ denote the manifold obtained by gluing some component $F$ of $\bdy M_1$ to some component of $\bdy M_2$ by some homeomorphism $\phi$. If $\phi$ is sufficiently complicated then any low genus, unstabilized Heegaard splitting of $M$ is an amalgamation of unstabilized, boundary-unstabilized splittings of $M_1$ and $M_2$, and possibly a Type II splitting of $F \times I$.}

\medskip

Here a Type II splitting of $F \times I$ consists of two copies of $F$ connected by an unknotted tube (see \cite{st:93}). Suppose, as in the theorem above, that $F$ is a boundary component of $M_1$, and $H_1$ is a Heegaard splitting of $M_1$. If we glue $F \times I$ to $\bdy M_1$, and amalgamate $H_1$ with a Type II splitting of $F \times I$, then the result is the same as if we had just boundary-stabilized $H_1$. 

Ideally, we would like to say that the splittings of $M_i$ given by Theorem \ref{t:AmalgamationExists} are uniquely determined by the Heegaard splitting of $M$ from which they come. However, no matter how complicated $\phi$ is this may not be the case, as the following construction shows.

Let $M_1$ be a 3-manifold with boundary homeomorphic to a surface $F$, that has inequivalent unstabilized, boundary-unstabilized splittings $H_1$ and $G_1$ that become equivalent after a boundary-stabilization. (For example, $M_1$ may be a Seifert fibered space with a single boundary component. {\it Vertical} splittings $H_1$ and $G_1$ would then be equivalent after a boundary stabilization, by \cite{schultens:96}.) Let $M_2$ be any 3-manifold with boundary homeomorphic to $F$, and let $H_2$ be an unstabilized, boundary-unstabilized Heegaard splitting of $M_2$. Glue $M_1$ to $M_2$ by any map $\phi$ to create the manifold $M$. Let $H$ be the amalgamation of $H_1$, $H_2$, and a Type II splitting of $F \times I$. Then $H$ is also the amalgamation of $G_1$, $H_2$ and a Type II splitting of $F \times I$. So the expression of $H$ as an amalagamation as described by the conclusion of Theorem \ref{t:AmalgamationExists} is not unique. 

This construction shows that Type II splittings of $F \times I$ are obstructions to the uniqueness of the decomposition given by Theorem \ref{t:AmalgamationExists}. In our final theorem, we show that this is the only obstruction:

\medskip

\noindent {\bf Theorem \ref{t:HighGenusGordonIsotopy}. }{\it
Let $M_1$ and $M_2$ be compact, orientable, irreducible 3-manifolds with incompressible boundary, neither of which is an $I$-bundle. Let $M$ denote the manifold obtained by gluing some component $F$ of $\bdy M_1$ to some component of $\bdy M_2$ by some homeomorphism $\phi$. Suppose $\phi$ is sufficiently complicated, and some low genus Heegaard splitting  $H$ of $M$ can be expressed as an amalgamation of unstabilized, boundary-unstabilized splittings of $M_1$ and $M_2$. Then this expression is unique.}

\medskip

This paper relies heavily on the technical machinery developed in \cite{gordon}. In Sections 2 through 5 we review this material. In Section 6 we state an important result from \cite{StabilizationResults}, which follows from the main result of \cite{barrier}. Anyone who has read \cite{StabilizationResults} can skip directly to Sections 7, 8, and 9, where we establish the results described above.

The author thanks Ryan Derby-Talbot for several helpful comments.

\section{Heegaard and Generalized Heegaard Splittings}

In this section we define {\it Heegaard splittings} and {\it Generalized Heegaard Splitting}. The latter structures were first introduced by Scharlemann and Thompson \cite{st:94} as a way of keeping track of handle structures. The definition we give here is more consistent with the usage in \cite{gordon}.

\begin{dfn}
A {\it compression body} $\mathcal C$ is a manifold formed in one of the following two ways:
	\begin{enumerate}
		\item Starting with a 0-handle, attach some number of 1-handles. In this case we say $\bdy _- \mathcal C=\emptyset$ and $\bdy _+ \mathcal C=\bdy \mathcal C$. 
		\item Start with some (possibly disconnected) surface $F$ such that each component has positive genus. Form the product $F \times I$. Then attach some number of 1-handles to $F \times \{1\}$. We say $\bdy _- \mathcal C= F \times \{0\}$ and $\bdy _+ \mathcal C$ is the rest of $\bdy \mathcal C$. 
	\end{enumerate}
\end{dfn}

\begin{dfn}
\label{d:Heegaard}
Let $H$ be a properly embedded, transversally oriented surface in a 3-manifold $M$, and suppose $H$ separates $M$ into $\VV$ and $\WW$. If $\VV$ and $\WW$ are compression bodies and $\VV \cap \WW=\partial _+ \VV=\partial _+ \WW=H$, then we say $H$ is a {\it Heegaard surface} in $M$.
\end{dfn}

\begin{dfn}
The transverse orientation on the Heegaard surface $H$ in the previous definition is given by a choice of normal vector. If this vector points into $\VV$, then we say any subset of $\VV$ is {\it above} $H$ and any subset of $\WW$ is {\it below} $H$. 
\end{dfn}

\begin{dfn}
\label{d:GHS}
A {\it generalized Heegaard splitting (GHS)} $H$ of a 3-manifold $M$ is a pair of sets of transversally oriented, connected, properly embedded surfaces,  $\thick{H}$ and $\thin{H}$ (called the {\it thick levels} and {\it thin levels}, respectively), which satisfy the following conditions. 
	\begin{enumerate}
		\item Each component $M'$ of $M \setminus \thin{H}$ meets a unique element $H_+$ of $\thick{H}$. The surface $H_+$ is a Heegaard surface in $\overline{M'}$ dividing $\overline{M'}$ into compression bodies $\VV$ and $\WW$. Each component of $\bdy _- \VV$ and $\bdy _- \WW$ is an element of $\thin{H}$. Henceforth we will denote the closure of the component of $M \setminus \thin{H}$ that contains an element $H_+ \in \thick{H}$ as $M(H_+)$. 
		\item Suppose $H_-\in \thin{H}$. Let $M(H_+)$ and $M(H_+')$ be the submanifolds on each side of $H_-$. Then $H_-$ is below $H_+$ in $M(H_+)$ if and only if it is above $H_+'$ in $M(H_+')$.
		\item The term ``above" extends to a partial ordering on the elements of $\thin{H}$ defined as follows. If $H_-$ and $H'_-$ are subsets of $\bdy M(H_+)$, where $H_-$ is above $H_+$ in $M(H_+)$ and $H_-'$ is below $H_+$ in $M(H_+)$, then $H_-$ is above $H_-'$ in $M$.
	\end{enumerate}
\end{dfn}

\section{Reducing GHSs}

\begin{dfn}
Let $H$ be an embedded surface in $M$.  Let $D$ be a compression for $H$. Let $\VV$ denote the closure of the component of $M \setminus H$ that contains $D$. (If $H$ is non-separating then $\VV$ is the manifold obtained from $M$ by cutting open along $H$.) Let $N$ denote a regular neighborhood of $D$ in $\VV$. To {\it surger} or {\it compress} $H$ along $D$ is to remove $N \cap H$ from $H$ and replace it with the frontier of $N$ in $\VV$. We denote the resulting surface by $H/D$. 
\end{dfn}

It is not difficult to find a complexity for surfaces which decreases under compression. We now present an operation that one can perform on GHSs that also reduces some complexity (see Lemma 5.14 of \cite{gordon}).  This operation is called {\it weak reduction}. 

\begin{dfn}
Let $H$ be a separating, properly embedded surface in $M$. Let $D$ and $E$ be compressions on opposite sides of $H$. Then we say $(D,E)$ is a {\it weak reducing pair} for $H$ if $D \cap E=\emptyset$. When $(D,E)$ is a weak reducing pair, then we let $H/DE$ denote the result of simultaneous surgery along $D$ and $E$.
\end{dfn}

\begin{dfn}
\label{d:PreWeakReduction}
Let $M$ be a compact, connected, orientable 3-manifold. Let $G$ be a GHS. Let $(D,E)$ be a weak reducing pair for some  $G_+ \in \thick{G}$. Define 
	\[T(H)=\thick{G} -\{G_+\} \cup \{G_+ / D, G_+ / E\},\ \mbox{and}\]
	\[t(H) =\thin{G} \cup \{G_+/DE\}.\]

A new GHS $H=\{\thick{H},\thin{H}\}$ is then obtained from $\{T(H), t(H)\}$ by successively removing the following:
	\begin{enumerate}
		\item Any sphere element $S$ of $T(H)$ or $t(H)$ that is inessential, along with any elements of $t(H)$ and $T(H)$ that lie in the ball that it bounds. 
		\item Any element $S$ of $T(H)$ or $t(H)$ that is $\bdy$-parallel, along with any elements of $t(H)$ and $T(H)$ that lie between $S$ and $\bdy M$. 
		\item Any elements $H_+ \in T(H)$ and $H_- \in t(H)$, where $H_+$ and $H_-$ cobound a submanifold $P$ of $M$, such that $P$ is a product, $P \cap T(H)=H_+$, and $P \cap t(H)=H_-$. 
	\end{enumerate}

We say the GHS $H$ is obtained from $G$ by {\it weak reduction} along $(D,E)$.
\end{dfn}

The first step in weak reduction is illustrated in Figure \ref{f:WeakReduction}. 

        \begin{figure}[htbp]
        \psfrag{1}{$G_+/D$}
        \psfrag{2}{$G_+/E$}
        \psfrag{3}{$G_+/DE$}
        \psfrag{G}{$G_+$}
        \psfrag{E}{$E$}
        \psfrag{D}{$D$}
        \vspace{0 in}
        \begin{center}
       \includegraphics[width=3.5 in]{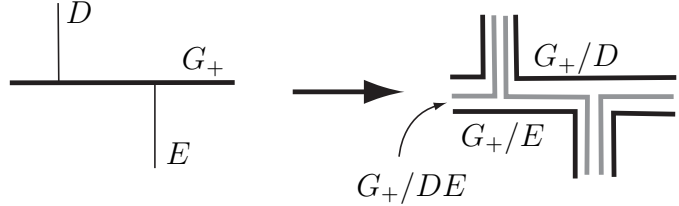}
       \caption{The first step in weak reduction.}
        \label{f:WeakReduction}
        \end{center}
        \end{figure}

\begin{dfn}
\label{d:destabilization}
The weak reduction of a GHS given by the weak reducing pair $(D,E)$ for the thick level $G_+$ is called a {\it destabilization} if $G_+/DE$ contains a sphere.
\end{dfn}

\begin{dfn}
Suppose $H$ is a Heegaard splitting of a manifold $M$ with non-empty boundary. Let $F$ denote a component of $\bdy M$. Then the surface $H'$ obtained from $H$ by attaching a copy of $F$ to it by an unknotted tube is also a Heegaard surface in $M$. We say $H'$ was obtained from $H$ by a {\it boundary-stabilization along $F$}. The reverse operation is called a {\it boundary-destabilization} along $F$. 
\end{dfn}

\begin{dfn}
Suppose $H$ is a GHS of $M$. Let $N$ denote a submanifold of $M$ bounded by elements of $\thin{H}$. Then we may define a GHS $H(N)$ of $N$. The thick and thin levels of $H(N)$ are the thick and thin levels of $H$ that lie in $N$. 
\end{dfn}

\section{Amalgamation}

Let $H$ be a GHS of a connected 3-manifold $M$. In \cite{gordon} we use $H$ to produce a complex that is the spine of a Heegaard splitting of $M$. We call this splitting the {\it amalgamation} of $H$, and denote it $\amlg{H}$. This splitting is defined in such a way so that if $H$ has a unique thick level $H_+$, then $\amlg{H}=H_+$. 

\begin{lem}
\label{l:AmalgGenus}
{\rm (\cite{gordon}, Corollary 7.5)}  Suppose $M$ is irreducible, $H$ is a GHS of $M$ and $G$ is obtained from $H$ by a weak reduction which is not a destabilization. Then $\amlg{H}$ is isotopic to $\amlg{G}$.
\end{lem}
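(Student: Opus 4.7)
The plan is to localize the comparison. The two GHSs $H$ and $G$ agree outside of $M(H_+)$, where $H_+\in\thick{H}$ is the thick level being weak-reduced along the pair $(D,E)$, so the portions of $\amlg{H}$ and $\amlg{G}$ there are identical on the nose, and it suffices to compare them inside this region. Within $M(H_+)$, the GHS $H$ has $H_+$ as its unique thick level, with compression bodies $\VV$ above and $\WW$ below. After weak reduction, the new thin level $H_+/DE$ splits $M(H_+)$ into an upper and a lower subregion, with new thick levels $H_+/D$ and $H_+/E$ respectively, and the portion of $\amlg{G}$ in $M(H_+)$ is constructed by amalgamating across $H_+/DE$ and the bounding thin levels.

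The key geometric claim is that amalgamation across $H_+/DE$ inverts the surgeries along $D$ and $E$. The compression body $\CC_1$ between $H_+/DE$ and $H_+/D$ (a thin sliver of $\VV$ containing $D$) admits a spine whose only nontrivial edge is an arc dual to $D$; likewise the compression body $\CC_2$ between $H_+/E$ and $H_+/DE$ admits a spine with a dual arc to $E$. The amalgamation procedure of \cite{gordon} tubes these spines across $H_+/DE$, and I would verify that the two tubes added along the dual arcs exactly cancel the surgeries defining $H_+/D$, $H_+/E$, $H_+/DE$ from $H_+$. The amalgamated Heegaard surface inside $M(H_+)$ is then ambient isotopic to $H_+$, and since the outer amalgamations with the compression bodies at $\bdy M(H_+)$ proceed identically in both $H$ and $G$, the global result is $\amlg{G}$ isotopic to $\amlg{H}$.

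The hypothesis that the weak reduction is not a destabilization is precisely what keeps this local cancellation clean. If $H_+/DE$ contained a sphere $S$, then by the irreducibility of $M$ it would bound a ball, and cleanup step (1) of Definition \ref{d:PreWeakReduction} would delete $S$ together with any thick level inside that ball; this deletion drops the genus of $\amlg{G}$ by one, so that $\amlg{H}$ would be a stabilization of $\amlg{G}$ rather than isotopic to it. The assumption rules out this single bad case. The other cleanup steps only strip boundary-parallel components or product submanifolds, corresponding to Heegaard structure that is trivially absorbed by amalgamation, and therefore cannot alter the isotopy class of $\amlg{G}$.

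The main obstacle is the explicit verification that the amalgamation tubes added across $H_+/DE$ reconstruct $H_+$ up to isotopy rather than some other surface of the same genus. I would handle this by restricting attention to a regular neighborhood of $H_+\cup D\cup E$ in $M$, where the topology is simple enough that the spines of $\CC_1$ and $\CC_2$ and the prescribed amalgamation tubes can be exhibited directly and followed by an ambient isotopy fixing the frontier of the neighborhood. Outside this neighborhood the two amalgamations agree by construction, so the local identification extends to the required global isotopy between $\amlg{G}$ and $\amlg{H}$.
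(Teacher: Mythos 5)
The paper does not prove this lemma---it is imported from \cite{gordon}, Corollary 7.5---but your argument is essentially the proof given there: amalgamating across the new thin level $H_+/DE$ reattaches the handles dual to $D$ and $E$ and recovers $H_+$ up to isotopy, and the hypothesis that the weak reduction is not a destabilization is exactly what excludes the sphere component of $H_+/DE$ that would drop the genus of the amalgamation. One slip worth fixing: since $H_+/DE$ is obtained from $H_+/D$ by compressing along $E$, the compression body between $H_+/D$ and $H_+/DE$ is the sliver containing $E$ (namely $H_+/DE\times I$ plus the one-handle with cocore $E$), while the sliver containing $D$ lies on the other side of $H_+/DE$; you have these two labels reversed, though the cancellation argument is unaffected.
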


Every GHS $G$ comes from some GHS $H$ with a single thick level $H_+$ by a sequence of weak reductions that are not stabilizations. By Lemma \ref{l:AmalgGenus}, it follows that $\amlg{G}=H_+$. It also follows that if a GHS $G$ is obtained from a GHS $H$ by a weak reduction or a destabilization then the genus of $\amlg{G}$ is at most the genus of $\amlg{H}$. 

\begin{dfn}
The {\it genus} of a GHS is the genus of its amalgamation. 
\end{dfn}

\section{Sequences of GHSs}
\label{s:LastDefSection}

\begin{dfn}
A {\it Sequence Of GHSs} (SOG), $\{H^i\}$ of $M$ is a finite sequence such that for each $i$ either $H^i$ or $H^{i+1}$ is obtained from the other by a weak reduction.
\end{dfn}

\begin{dfn}
If $\bf H$ is a SOG and $k$ is such that $H^{k-1}$ and $H^{k+1}$ are obtained from $H^k$ by a weak reduction then we say the GHS $H^k$ is {\it maximal} in $\bf H$. 
\end{dfn}

It follows that maximal GHSs are larger than their immediate predecessor and immediate successor.

\begin{dfn}
The {\it genus} of a SOG is the maximum among the genera of its GHSs.
\end{dfn}

Just as there are ways to make a GHS ``smaller," there are also ways to make a SOG ``smaller." These are called {\it SOG reductions}, and are explicitly defined in Section 8 of \cite{gordon}. If the first and last GHS of a SOG admit no weak reductions, and there are no SOG reductions then the SOG is said to be {\it irreducible}. For our purposes, all we need to know about SOG reduction is that the maximal GHSs of the new SOG are obtained from the maximal GHSs of the old one by weak reduction, and the following lemma holds:

\begin{lem}
\label{l:GenusGoesDown}
If a SOG $\Lambda$ is obtained from an SOG $\Gamma$ by a reduction then the genus of $\Gamma$ is at least the genus of $\Lambda$. 
\end{lem}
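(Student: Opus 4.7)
The plan is to reduce the lemma to two observations: (a) the genus of a SOG is always realized at either a maximal GHS or at one of the two endpoints of the sequence, and (b) the hypothesis on SOG reductions controls how both the maximal GHSs and the endpoints change when passing from $\Gamma$ to $\Lambda$.

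For (a), I would start from an arbitrary GHS $H^i$ in a SOG and walk to a neighbor. If $H^i$ is obtained from $H^{i+1}$ by a weak reduction, then by Lemma \ref{l:AmalgGenus} the amalgamation genus of $H^{i+1}$ is at least that of $H^i$ (weak reduction and destabilization only decrease amalgamation genus, so going ``against'' a weak reduction only increases it). Walking in this ``upward'' direction at every step, the walk must terminate at a GHS with no upward neighbor. By the definition of maximal GHS, this terminal point is either a maximal GHS of the SOG or one of the two endpoints. Hence every GHS in the SOG has amalgamation genus bounded by the genus of some peak, and so the genus of the SOG itself is realized at a peak.

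For (b), let $K$ be any peak of $\Lambda$. If $K$ is a maximal GHS of $\Lambda$, the stated property of SOG reductions provides a maximal GHS $G$ of $\Gamma$ from which $K$ is obtained by a weak reduction; Lemma \ref{l:AmalgGenus} (together with the comment that destabilization also cannot increase amalgamation genus) then gives $\gen(\amlg{K}) \le \gen(\amlg{G}) \le \gen(\Gamma)$. The analogous statement for endpoints is part of the formal definition of SOG reduction in \cite{gordon}, namely that an endpoint of $\Lambda$ is either an endpoint of $\Gamma$ or is obtained from an endpoint of $\Gamma$ by a weak reduction. In either case, the same application of Lemma \ref{l:AmalgGenus} bounds the amalgamation genus of the endpoint of $\Lambda$ by that of a GHS of $\Gamma$. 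Combining (a) and (b), the genus of $\Lambda$ is at most the genus of $\Gamma$.

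The main obstacle is the endpoint case. The excerpt only asserts the correspondence for maximal GHSs under SOG reduction, so to make the argument airtight one must either invoke the precise definition of SOG reduction from \cite{gordon} to verify the endpoint behavior, or restrict the walk in step (a) so that it must land on a maximal GHS. The cleanest path is the former: once it is confirmed that endpoints of $\Lambda$ correspond to GHSs of $\Gamma$ via weak reduction (as is indeed the case in \cite{gordon}), the rest of the argument is a one-line application of Lemma \ref{l:AmalgGenus} to each peak.
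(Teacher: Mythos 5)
Your proposal is correct and follows the same strategy as the paper's own proof: reduce the statement to (a) the genus of a SOG is realized at a peak GHS, and (b) peaks of $\Lambda$ arise from GHSs of $\Gamma$ by weak reduction, so Lemma \ref{l:AmalgGenus} (together with the observation that destabilization also cannot increase amalgamation genus) gives the bound. The one substantive difference is that you are more careful than the paper about step (a). The paper asserts outright that ``the genus of a SOG is the maximum among the genera of its maximal GHSs,'' but under the paper's definition a maximal GHS must have \emph{both} neighbors obtained from it, so an endpoint $\Lambda^1$ or $\Lambda^n$ can be a local peak without being a maximal GHS. You explicitly note this and handle the endpoint case by appealing to the precise definition of SOG reduction from \cite{gordon} to argue that endpoints of $\Lambda$ are endpoints of $\Gamma$ up to weak reduction. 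In the two places the lemma is actually applied (Theorems \ref{t:HigherGenusGordon} and \ref{t:HighGenusGordonIsotopy}), the endpoint GHSs are obtained by maximal sequences of weak reductions and hence admit no further ones, so they cannot be peaks and the paper's shortcut is harmless there; but your formulation is the tighter one for the lemma as stated in isolation.
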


\begin{proof}
Since weak reduction can only decrease the genus of a GHS, the genus of a SOG is the maximum among the genera of its maximal GHSs. But if one SOG is obtained from another by a reduction, then its maximal GHSs are obtained from GHSs of the original by weak reductions. The result thus follows from Lemma \ref{l:AmalgGenus}.
\end{proof}

\section{Barrier surfaces}
\label{s:BarrierSurfaces}

The following definition is different than the one given in \cite{StabilizationResults}, but by Lemma 7.4 of that paper they are equivalent. 

\begin{dfn}
An incompressible surface $F$ in a 3-manifold $M$ is a {\it $g$-barrier surface} if $F$ is isotopic to a thin level of every element of every irreducible SOG of $M$ whose genus is at most $g$. 
\end{dfn}

Note that a single element, irreducible SOG is a GHS that admits no weak reductions. It follows that if $H$ is such a GHS whose genus is at most $g$, and $F$ is a $g$-barrier surface, then $F$ is isotopic to a thin level of $H$. 

We now discuss the existence of $g$-barrier surfaces. For the remainder of this section, let $M$ be a compact, irreducible, (possibly disconnected) 3-manifold with incompressible boundary, such that no component of $M$ is an $I$-bundle. Suppose boundary components $F_1$ and $F_2$ of $M$ are homeomorphic.  Let $M_\phi$ be the manifold obtained from $M$ by gluing these boundary components together by the map $\phi:F_1 \to F_2$. 

Let $Q$ denote a properly embedded (possibly disconnected) surface in $M$ of maximal Euler characteristic, which is both incompressible and $\bdy$-incompressible, and is incident to both $F_1$ and $F_2$.  Then we define the {\it distance} of $\phi$ to be the distance between the loops of $\phi(F_1 \cap Q)$ and $F_2 \cap Q$. When the genus of $F_2$ is at least two, then this distance is measured in the curve complex of $F_2$. If $F_2 \cong T^2$, then this distance is measured in the Farey graph. 

The following is Lemma 7.4 of \cite{StabilizationResults}. It is a direct consequence of the main result of \cite{barrier}.

\begin{thm}
\label{t:Barrier}
Let $F$ denote the image of $F_1$ in $M_\phi$. There is a constant $K$, depending linearly on $\chi(Q)$, such that if the distance of $\phi \ge Kg$, then $F$ is a $g$-barrier surface in $M_\phi$.
\end{thm}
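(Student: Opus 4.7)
The plan is to deduce Theorem \ref{t:Barrier} directly from the main result of \cite{barrier} by matching definitions. A $g$-barrier surface is one that appears, up to isotopy, as a thin level of every GHS contained in every irreducible SOG of $M_\phi$ of genus at most $g$. So it suffices to fix such a SOG $\mathbf{H}$, pick an arbitrary GHS $H \in \mathbf{H}$, and produce an isotopy of $F$ onto a thin level of $H$.

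First I would observe that by the definition of the genus of a SOG, together with Lemma \ref{l:GenusGoesDown}, every GHS appearing in $\mathbf{H}$ has genus at most $g$. In particular, each $H \in \mathbf{H}$ has $|\chi(\thick{H})| + |\chi(\thin{H})|$ bounded by a linear function of $g$; this is the complexity that will interact with the distance hypothesis $d(\phi) \ge Kg$.

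Next I would invoke the main theorem of \cite{barrier}. Roughly, that theorem provides a dichotomy for the topologically minimal surfaces of $M_\phi$ whose complexity is small compared with $d(\phi)/\chi(Q)$: such a surface can be isotoped so that either it is disjoint from $F$, or it contains a copy of $F$ as a component. Applying this to the thin and thick levels of $H$ in turn, and using the standing hypothesis that $\bdy M$ is incompressible, I would conclude that every thin level of $H$ can be isotoped off $F$ and that the thick levels either are disjoint from $F$ or contain $F$. A thick level $G_+$ of $H$ is a Heegaard surface in some compression body region $M(G_+)$, so an incompressible closed surface in $M_\phi$ cannot be a component of $G_+$; this rules out $F \subset G_+$, so after isotopy all levels of $H$ are disjoint from $F$.

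Once $F$ is disjoint from every level of $H$, it lies in the interior of some region $M(G_+)$, and the incompressibility of $F$ together with the compression body structure on $M(G_+)$ forces $F$ to be parallel either to a component of $\bdy_-$ of that compression body or to a component of $\partial M$. In either case $F$ is isotopic to a thin level of $H$, which verifies the $g$-barrier condition. The linear dependence of $K$ on $\chi(Q)$ is inherited directly from the corresponding linear dependence in the main theorem of \cite{barrier}; since $\chi(Q)$ is a feature of the ambient pair $(M,\phi)$ and not of $\mathbf{H}$, the constant $K$ works uniformly across the SOG. The main obstacle is bookkeeping: translating the ``topologically minimal'' language used in \cite{barrier} into the thin/thick-level language of GHSs, and verifying that the complexity in \cite{barrier} is controlled by the genus of the GHS with the claimed linear constant, rather than a worse dependence.
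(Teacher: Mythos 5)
First, a point of calibration: the paper does not actually prove Theorem \ref{t:Barrier}. It is imported verbatim as Lemma 7.4 of \cite{StabilizationResults}, where it is derived from the main result of \cite{barrier}; so there is no in-paper argument to compare yours against, and any proof must genuinely reconstruct that derivation. Your overall route --- reduce to the main theorem of \cite{barrier} and then locate $F$ as a thin level --- is the intended one.

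However, your reconstruction has a concrete gap at the step where you apply the main theorem of \cite{barrier} ``to the thin and thick levels of $H$ in turn.'' That theorem concerns topologically minimal surfaces in $M_\phi$, but a thick level $G_+$ of a GHS is a Heegaard surface of the submanifold $M(G_+)$ cut out by thin levels; it generically compresses to both sides inside $M_\phi$ and is not a topologically minimal surface of $M_\phi$, so the dichotomy you invoke does not apply to it as stated. Likewise, the thin levels of a GHS sitting in the middle of an irreducible SOG are not obviously incompressible. The missing input is the structural theory of irreducible SOGs from \cite{gordon}: one must know that the thin levels of every GHS of an irreducible SOG are essential, and that the thick levels are strongly irreducible --- or, at the maximal GHSs, critical --- in their complementary regions; that is, that the relevant surfaces are topologically minimal of index at most $2$ \emph{relative to the pieces cut out by the (essential) thin levels}. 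It is precisely this index bound that feeds into the constant $K$ and is the reason one needs the full strength of \cite{barrier} rather than a statement about incompressible surfaces alone. Your closing step --- once $F$ misses every level it lies in a compression body, and a closed incompressible surface in a compression body is parallel to a component of $\bdy_-$, hence to a thin level (it cannot be parallel into $\bdy M_\phi$ since neither $M_i$ is an $I$-bundle) --- is correct, but without the structural input above the argument never gets to that point.
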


By employing Theorem \ref{t:Barrier} we may construct 3-manifolds with any number of $g$-barrier surfaces. Simply begin with a collection of 3-manifolds and successively glue boundary components together by sufficiently complicated maps.

\section{Amalgamations of unstabilized Heegaard splittings}
\label{s:Amalgamations}

\begin{thm}
\label{t:HigherGenusGordon}
Let $M_1$ and $M_2$ be compact, orientable, irreducible 3-manifolds with incompressible boundary, neither of which is an $I$-bundle. Let $M$ denote the manifold obtained by gluing some component $F$ of $\bdy M_1$ to some component of $\bdy M_2$ by some homeomorphism $\phi$. Let $H_i$ be an unstabilized, boundary-unstabilized Heegaard splitting of $M_i$. If $\phi$ is sufficiently complicated then the amalgamation of $H_1$ and $H_2$ in $M$ is unstabilized.
\end{thm}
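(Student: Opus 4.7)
The plan is to argue by contradiction. Suppose the amalgamation $H$ of $H_1$ and $H_2$ is stabilized; I will package this assumption into a SOG, then use the barrier hypothesis to localize the destabilization to one side of $F$, deriving a contradiction with the hypothesis that $H_1$ and $H_2$ are unstabilized and boundary-unstabilized.

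First, I would let $G$ denote the GHS of $M$ whose thick levels are $\{H_1, H_2\}$ and whose only interior thin level is $F$; then $\amlg{G}=H$. Let $K$ be the one-thick-level GHS consisting of $H$. The discussion after Lemma \ref{l:AmalgGenus} provides a sequence of weak reductions, none a destabilization, taking $K$ to $G$. If $H$ is stabilized, then $K$ admits a destabilization to some GHS $K'$, so splicing gives a SOG
\[
\Gamma:\ K' \leftarrow K \rightarrow K_1 \rightarrow \cdots \rightarrow K_n = G
\]
whose genus equals that of $H$, a quantity $g$ depending only on $H_1$, $H_2$, and $F$. Invoking Theorem \ref{t:Barrier}, I would pick $\phi$ complicated enough that $F$ is a $g$-barrier surface in $M$. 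Reducing $\Gamma$ to an irreducible SOG $\Lambda$, Lemma \ref{l:GenusGoesDown} bounds its genus by $g$, so $F$ is isotopic to a thin level of every GHS of $\Lambda$.

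Because $F$ is a thin level throughout $\Lambda$, every thick level of every GHS of $\Lambda$ lies entirely in $M_1$ or entirely in $M_2$, and so every weak reducing pair used along $\Lambda$ is supported in a single $M_i$. SOG reductions send maximal GHSs of $\Gamma$ to weak-reduction descendants in $\Lambda$, so the single destabilization of $\Gamma$ (the step $K \to K'$) survives in $\Lambda$ as a destabilization realized by a weak reducing pair lying in one of the $M_i$; without loss of generality, in $M_1$.

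Finally, I would restrict each GHS of $\Lambda$ to $M_1$ (meaningful because its thin levels contain $F$) to obtain a SOG of $M_1$ in which the singled-out step is still a destabilization, and whose endpoint on the $G$-side amalgamates to $H_1$ possibly after a single boundary-stabilization along $F$. Lemma \ref{l:AmalgGenus} applied along this restricted SOG then forces the amalgamation at the $G$-end to be stabilized; hence $H_1$ is stabilized, or becomes stabilized after a boundary-stabilization along $F$, i.e.\ $H_1$ is boundary-stabilized. Both alternatives contradict the hypotheses on $H_1$, so $H$ cannot have been stabilized; the case in which the destabilization localizes to $M_2$ is symmetric. The most delicate step will be rigorously tracking the destabilization through the SOG-reduction process and identifying the $M_1$-restriction of the endpoint of $\Lambda$ with $H_1$ up to at most one boundary-stabilization, both of which lean heavily on the untelescoping machinery of \cite{gordon}.
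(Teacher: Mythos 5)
Your overall strategy matches the paper's: package the hypothetical destabilization into a SOG, reduce it, use the $g$-barrier property of $F$ to keep $F$ as a thin level throughout, and localize the contradiction to one of the $M_i$. But there is a genuine gap in how you handle the case where $F$ is isotopic to \emph{more than one} thin level of some GHS of $\Lambda$. Your assertion that ``every thick level of every GHS of $\Lambda$ lies entirely in $M_1$ or entirely in $M_2$'' fails in that case: a thick level can lie in a product region $F\times I$ cobounded by two parallel copies of $F$, and then the restriction of the GHS to ``$M_1$'' is ambiguous and its amalgamation need not be $H_1$. You partially acknowledge this with the phrase ``possibly after a single boundary-stabilization along $F$,'' but your closing inference is wrong: from ``a boundary-stabilization of $H_1$ is stabilized'' you cannot conclude ``$H_1$ is boundary-stabilized.'' These are different properties --- the paper's own introduction exhibits $\gamma$-primitive splittings, which are unstabilized and boundary-unstabilized yet become stabilized after a boundary-stabilization. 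So in the case where the destabilization occurs after a second copy of $F$ has appeared, your argument produces no contradiction with the stated hypotheses.

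The paper's proof avoids this by splitting into two cases indexed differently than yours. Let $m$ be the last index at which $F$ is a \emph{unique} thin level and $p$ the index of the first destabilization in $\Lambda$; Lemma 8.12 of \cite{gordon} (no stabilization precedes the first destabilization) guarantees that through index $p$ the restricted amalgamations are still exactly $H_1$ and $H_2$. If $p\le m$ the destabilization lives in one $M_i$ and contradicts $H_i$ being unstabilized, as you intend. If $p>m$, the contradiction is extracted \emph{before ever reaching the destabilization}: at step $m+1$ two parallel copies of $F$ cobound a product $P$, and $\amlg{\Lambda^{m+1}(P)}$ is an unstabilized splitting of $F\times I$ (else $H_1$ itself would be stabilized). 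By \cite{st:93} it is Type I (impossible, since a thick level parallel to the thin levels is removed during weak reduction) or Type II, in which case $H_1=H_1^{m+1}$ is \emph{literally} an amalgamation with a Type II splitting, i.e.\ is boundary-stabilized --- the honest contradiction with your hypothesis. You would need to restructure your endgame along these lines; as written, the final step does not close.
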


Here the term ``sufficiently complicated" means that the distance of $\phi$ is high enough so that by Theorem \ref{t:Barrier} the surface $F$ becomes a $g$-barrier surface, where $g=\mbox{genus}(H_1)+\mbox{genus}(H_2)-\mbox{genus}(F)$.

\begin{proof}
Let ${\bf \Gamma}$ be the SOG depicted in Figure \ref{f:InitialSOGgordon}. The second GHS pictured is the one whose thick levels are $H_1$ and $H_2$. The first GHS in the figure is obtained from this one by a maximal sequence of weak reductions. The third GHS is the one whose only thick level is the amalgamation $H$ of $H_1$ and $H_2$. The next GHS pictured is obtained from $H$ by some number of destabilizations. Finally, the last GHS is obtained from the second to last by a maximal sequence of weak reductions. Note that by construction, $\mbox{genus}({\bf \Gamma})=\mbox{genus}(H)=g$. (For the second equality see, for example, Lemma 5.7 of \cite{StabilizationResults}.)

        \begin{figure}[htbp]
        \psfrag{X}{$H_2$}
        \psfrag{H}{$H_1$}
        \psfrag{F}{$F$}
        \psfrag{1}{$H$}
        \psfrag{2}{$G$}
        \vspace{0 in}
        \begin{center}
       \includegraphics[width=3.5 in]{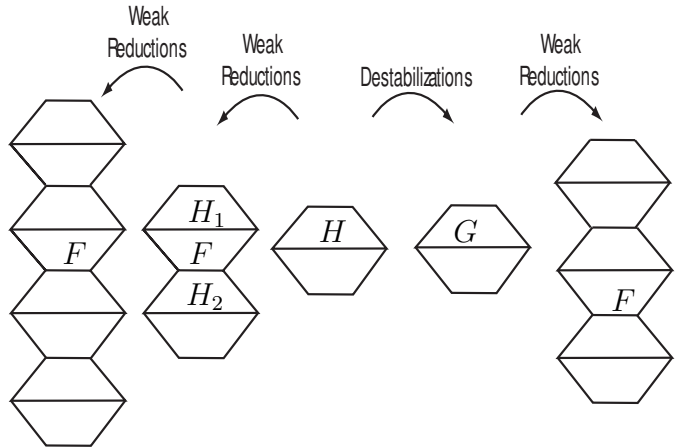}
       \caption{The initial SOG, $\bf \Gamma$.}
        \label{f:InitialSOGgordon}
        \end{center}
        \end{figure}

Now let ${\bf \Lambda}=\{\Lambda ^i\}_{i=1}^n$ be the SOG obtained from ${\bf \Gamma}$ by a maximal sequence of SOG reductions. When the first and last GHS of a SOG admit no weak reductions, then they remain unaffected by SOG reduction. Hence, $\Lambda^1$ is the first element of ${\bf \Gamma}$ and $\Lambda ^n$ is the last element of ${\bf \Gamma}$.

Since $F$ is a $g$-barrier surface, it is isotopic to a thin level of every GHS of $\bf \Lambda$. Let $m$ denote the largest number such that $F$ is isotopic to a {\it unique} thin level $F_i$ of $\Lambda ^i$, for all $i \le m$. The surface $F_i$ then divides $M$ into manifolds $M_1^i$ and $M_2^i$, homeomorphic to $M_1 $ and $M_2$, for each $i \le m$. 

Now note that there are no stabilizations in the original SOG $\bf \Gamma$. It thus follows from Lemma 8.12 of \cite{gordon} that the first destabilization in $\bf \Lambda$ happens before the first stabilization. Furthermore, as the genus of $\Lambda ^n$ is less than the genus of $\Lambda ^1$, there is at least one destabilization in $\bf \Lambda$. Let $p$ denote the smallest value for which $\Lambda^{p+1}$ is obtained from $\Lambda ^p$ by a destabilization. Then for all $i\le p$, either $\Lambda^i$ or $\Lambda ^{i-1}$ is obtained from the other by a weak reduction that is not a destabilization. 

If $p \le m$, then for all $i \le p$, either $\Lambda^i(M_1^i)=\Lambda^{i-1}(M_1^{i-1})$ or one of $\Lambda^i(M_1^i)$ and $\Lambda^{i-1}(M_1^{i-1})$ is obtained from the other by a weak reduction that is not a destabilization. 
It follows from Lemma \ref{l:AmalgGenus} that $H_1^i=\amlg{\Lambda^i(M_1^i)}$ is the same for all $i\le p$. But $H_1^1=H_1$, so $H_1^p=H_1$. By identical reasoning $H_2^p=\amlg{\Lambda^p(M_2^p)}=H_2$. But $H_1$ and $H_2$ are unstabilized, so neither $H_1^{p+1}$ nor $H_2^{p+1}$  can be obtained from $H_1^p$ or $H_2^p$ by destabilization, a contradiction. 

We thus conclude $p > m$, and thus $H_1^m=H_1$ and $H_2^m=H_2$. In particular, it follows that $m$ is strictly less than $n$. That is, there exists a GHS $\Lambda ^{m+1}$ which has two thin levels isotopic to $F$. 

Since $\Lambda ^{m+1}$ has a thin level that is not a thin level of $\Lambda^m$, it must be obtained from $\Lambda^m$ by a weak reduction. It follows that there is some thin level $F_{m+1}$ of $\Lambda ^{m+1}$ that is identical to $F_m$. The other thin level of $\Lambda ^{m+1}$ that is isotopic to $F$ we call $F_{m+1}'$. The surface $F_{m+1}'$ either lies in $M_1^m$ or $M_2^m$. Assume the former. Let $M^{m+1}_1$ denote the side of $F_{m+1}$ homeomorphic to $M_1$. It follows that $\Lambda^{m+1}(M_1^{m+1})$ is obtained from $\Lambda^m(M^m_1)$ by a weak reduction that is not a destabilization. Thus, by  Lemma \ref{l:AmalgGenus}, \[H^{m+1}_1=\amlg{\Lambda^{m+1}(M^{m+1}_1)}=  \amlg{\Lambda^m(M^m_1)}=H_1^m=H_1.\] 

The surfaces $F_{m+1}$ and $F_{m+1}'$ cobound a product region $P$ of $M$. A GHS of $P$ is given by $\Lambda^{m+1}(P)$, and thus $H_P=\amlg{\Lambda^{m+1}(P)}$ is a Heegaard splitting of a product. If this splitting is stabilized, then $H^{m+1}_1$ would be stabilized. But since $H^{m+1}_1=H_1$, and $H_1$ is unstabilized, this is not the case. 

We conclude $H_P$ is an unstabilized Heegaard splitting of $P$. By \cite{st:93} such a splitting admits no weak reductions, and thus  $H_P$ must be the unique thick level of $\Lambda^{m+1}(P)$. From \cite{st:93} this splitting is either  a copy of $F$, or two copies of $F$ connected by a single unknotted tube. In the former case we have a contradiction, as the thick level of  $\Lambda^{m+1}(P)$ would be parallel to the two thin levels $F_{m+1}$ and $F_{m+1}'$, and would thus have been removed during weak reduction. In the latter case $H^{m+1}_1$ is boundary-stabilized. As this Heegard splitting is $H_1$, which is not boundary-stabilzed, we again have a contradiction. 
\end{proof}

A separating surface $H$ in a 3-manifold is said to be {\it weakly reducible} \cite{cg:87} if there is a weak reducing pair for it, and {\it strongly irreducible} otherwise. Note that every Heegaard splitting surface that is an amalgamation of a GHS with multiple thick levels is weakly reducible. 

An example of a 3-manifold that has a weakly reducible, yet unstabilized Heegaard splitting which is not a minimal genus splitting has been elusive. In the next corollary we use Theorem \ref{t:HigherGenusGordon} to construct manifolds that have arbitrarily many such splittings. 

\begin{cor}
\label{c:Moriah}
There exist manifolds that contain arbitrarily many non-minimal genus, unstabilized Heegaard splittings which are not strongly irreducible.
\end{cor}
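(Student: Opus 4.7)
The plan is to realize the desired manifold $M$ as a gluing $M_1 \cup_\phi M_2$ in which $M_1$ carries many unstabilized, boundary-unstabilized Heegaard splittings of pairwise distinct genera. Fix any positive integer $n$. I would take a compact, orientable, irreducible 3-manifold $M_1$, not an $I$-bundle, with incompressible boundary and a distinguished boundary component $F$, carrying $n+1$ unstabilized, boundary-unstabilized Heegaard splittings $H_1^{(0)}, H_1^{(1)}, \ldots, H_1^{(n)}$ of strictly increasing genera. Such $M_1$ exist; for instance, a Seifert fibered space over a base orbifold with boundary and sufficiently many exceptional fibers has vertical Heegaard splittings realizing a long list of distinct genera, each of them unstabilized and boundary-unstabilized (see \cite{schultens:96}, \cite{ms:04}). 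For $M_2$ I would take any manifold satisfying the hypotheses of Theorem \ref{t:HigherGenusGordon} with a boundary component homeomorphic to $F$, equipped with an unstabilized, boundary-unstabilized splitting $H_2$.

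Next I apply Theorem \ref{t:Barrier} to choose a gluing homeomorphism $\phi:F \to \partial M_2$ of distance large enough that $F$ is a $g$-barrier surface in $M := M_1 \cup_\phi M_2$ for $g = \gen(H_1^{(n)}) + \gen(H_2) - \gen(F)$, and hence for every smaller value of $g$ as well. Then $\phi$ is ``sufficiently complicated'' in the sense of Theorem \ref{t:HigherGenusGordon} for each pair $(H_1^{(i)}, H_2)$, and that theorem delivers that the amalgamation $K^{(i)}$ of $H_1^{(i)}$ with $H_2$ is an unstabilized Heegaard splitting of $M$. Since $K^{(i)}$ is an amalgamation of a GHS with two distinct thick levels, the observation preceding the corollary shows that it is weakly reducible, hence not strongly irreducible. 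The genus $\gen(K^{(i)}) = \gen(H_1^{(i)}) + \gen(H_2) - \gen(F)$ varies strictly with $i$, so the $K^{(i)}$ are mutually non-isotopic. At most one of them can realize the minimal Heegaard genus of $M$, so at least $n$ of the splittings $K^{(i)}$ are simultaneously non-minimal genus, unstabilized, and not strongly irreducible.

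The principal obstacle I anticipate is exhibiting the manifold $M_1$ with the required list of distinct-genus unstabilized, boundary-unstabilized splittings, since the boundary-unstabilized condition, unlike being merely unstabilized, is genuinely restrictive. Vertical splittings of Seifert fibered spaces with a single boundary component handle this cleanly via the classification work of Schultens \cite{schultens:96} and Moriah-Schultens \cite{ms:04}, where the failure to boundary-destabilize is established directly. A more self-contained alternative would be to build $M_1$ inductively by gluing simpler pieces along sufficiently complicated maps and to read off its low-genus unstabilized splittings from Theorem \ref{t:AmalgamationExists}, but then additional work is needed to ensure that enough of the resulting splittings are boundary-unstabilized rather than boundary-stabilizations of lower-genus amalgamations.
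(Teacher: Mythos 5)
Your overall strategy coincides with the paper's: produce a list of unstabilized, boundary-unstabilized splittings of the pieces whose pairwise amalgamations have distinct genera, invoke Theorem \ref{t:HigherGenusGordon} after choosing $\phi$ of large enough distance (via Theorem \ref{t:Barrier}) to conclude each amalgamation is unstabilized, and finish with the observation that every amalgamation of a GHS with more than one thick level is weakly reducible. That outline is sound. The problem is the ingredient you yourself flag as the principal obstacle: your proposed $M_1$ does not work. Vertical Heegaard splittings of a fixed Seifert fibered space all have the same genus (determined by the base orbifold and the number of exceptional fibers); what varies among them is the choice of partition of exceptional fibers, not the genus. Indeed the paper's own introduction uses exactly this phenomenon --- inequivalent vertical splittings $H_1$ and $G_1$ of a Seifert fibered space that have equal genus and become equivalent after one boundary-stabilization \cite{schultens:96} --- as the source of \emph{non-uniqueness} in Theorem \ref{t:AmalgamationExists}. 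So a Seifert fibered $M_1$ does not supply $n+1$ such splittings of strictly increasing genera, and the citation of \cite{schultens:96}, \cite{ms:04} does not establish what you need.

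The paper fills this gap differently, and the trick is worth internalizing: it takes $M_1$ and $M_2$ to be copies of a manifold with torus boundary admitting \emph{strongly irreducible} Heegaard splittings of arbitrarily high genus (obtained from the Casson--Gordon examples, see \cite{sedgwick:97}, by drilling out a core solid torus of one handlebody). Strong irreducibility does double duty here: a stabilized or boundary-stabilized splitting is automatically weakly reducible, so a strongly irreducible splitting is both unstabilized and boundary-unstabilized for free, with no appeal to any classification result. Amalgamating the genus-$g$ splittings of the two copies for $g\le G$ then yields the desired family. If you replace your Seifert fibered $M_1$ with such a manifold (or with any other manifold for which distinct-genus unstabilized, boundary-unstabilized splittings are actually established), the rest of your argument goes through as written.
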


\begin{proof}
Let $M$ denote a 3-manifold with torus boundary, and strongly irreducible Heegaard splittings of arbitrarily high genus. (Such an example has been constructed by Casson and Gordon. See \cite{sedgwick:97}. The manifold they construct is closed, but there is a solid torus that is a core of one of the handlebodies bounded by each Heegaard surface. Thus, removing this solid torus produces  a manifold with torus boundary that has arbitrarily high genus strongly irreducible Heegaard splittings.) 

Now let $M_1$ and $M_2$ be two copies of $M$, and let $H_g^i$ denote a genus $g$ strongly irreducible splitting in $M_i$. As $H_g^i$ is strongly irreducible, it is neither stabilized nor boundary-stabilized. Hence, if $M_1$ is glued to $M_2$ by a sufficiently complicated homeomorphism, it follows from Theorem \ref{t:HigherGenusGordon} that the amalgamation of $H_g^1$ and $H_g^2$ is unstabilized, for all $g \le G$. (One can make $G$ as high as desired without changing the genus of $M_1 \cup M_2$ by gluing $M_1$ to $M_2$ by more and more complicated maps.)

Finally, note that every amalgamation is weakly reducible. 
\end{proof}

\section{Low genus splittings are amalgamations}

In this section we establish a refinement of a result due independently to Lackenby \cite{lackenby:04}, Souto \cite{souto}, and Li \cite{li:08}. Their result says that if 3-manifolds $M_1$ and $M_2$ are glued by a sufficiently complicated map, then all low genus, unstabilized Heegaard splittings of the resulting manifold are amalgamations of splittings of $M_1$ and $M_2$. 

\begin{thm}
\label{t:AmalgamationExists}
Let $M_1$ and $M_2$ be compact, orientable, irreducible 3-manifolds with incompressible boundary, neither of which is an $I$-bundle. Let $M$ denote the manifold obtained by gluing some component $F$ of $\bdy M_1$ to some component of $\bdy M_2$ by some homeomorphism $\phi$. If $\phi$ is sufficiently complicated then any low genus, unstabilized Heegaard splitting $H$ of $M$ is an amalgamation of unstabilized, boundary-unstabilized splittings of $M_1$ and $M_2$, and possibly a Type II splitting of $F \times I$.
\end{thm}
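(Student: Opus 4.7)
The plan is to mirror the strategy of Theorem \ref{t:HigherGenusGordon} in reverse. Starting from the trivial GHS $\{H\}$, first I would apply a maximal sequence of weak reductions to obtain an irreducible GHS $G$. As in the proof of Theorem \ref{t:HigherGenusGordon}, no step in this sequence can be a destabilization: if the first destabilization occurred at step $k+1$, Lemma \ref{l:AmalgGenus} applied to the preceding non-destabilizing reductions would give that the step-$k$ amalgamation is $H$, so the step-$(k+1)$ destabilization would exhibit $H$ as stabilized, contradicting the hypothesis. Repeated application of Lemma \ref{l:AmalgGenus} then yields $\amlg{G}=H$, so $G$ has the same low genus $g=\mbox{genus}(H)$.

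Next, since $G$ admits no weak reducing pair, the single-element SOG $\{G\}$ is irreducible of genus $g$. The sufficient-complication hypothesis together with Theorem \ref{t:Barrier} gives that $F$ is a $g$-barrier surface in $M$, and hence at least one thin level of $G$ is isotopic to $F$. Let $F_0,\ldots,F_k$ enumerate all thin levels of $G$ isotopic to $F$. These disjoint parallel incompressible surfaces cobound a chain of product regions $P_j\cong F\times I$, with outermost copies bounding submanifolds $M_1',M_2'$ homeomorphic to $M_1,M_2$. Correspondingly $G$ decomposes into sub-GHSs $G(M_1'),G(M_2'),G(P_j)$, and because amalgamation respects thin-level decomposition, $H$ is the amalgamation along the $F_j$ of $H_1:=\amlg{G(M_1')}$, $H_2:=\amlg{G(M_2')}$, and the product pieces $H_{P,j}:=\amlg{G(P_j)}$. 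Each $H_{P,j}$ is an unstabilized Heegaard splitting of $F\times I$ (its sub-GHS inherits the absence of weak reducing pairs from $G$), and by \cite{st:93} it is therefore Type I or Type II; the Type I case is excluded exactly as in Theorem \ref{t:HigherGenusGordon}, since a thick level isotopic to $F$ would be parallel to the bounding thin levels $F_j,F_{j+1}$ and would have been removed by the cleanup phase of weak reduction.

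To finish, I would check that each $H_i$ is unstabilized and boundary-unstabilized. If $H_i$ were stabilized, its cancelling disk pair would survive the amalgamation construction to give a cancelling disk pair on $H$, contradicting unstabilization of $H$. If $H_i$ were boundary-stabilized along some $F'\subset\partial M_i$, the associated weak reducing pair would, upon surgery, yield a $\partial$-parallel copy of $F'$ in $G(M_i')$: in the case $F'=F$, this would be a thin level isotopic to $F$ not among $F_0,\ldots,F_k$, contradicting our enumeration; in the case $F'\ne F$, it would give a weak reducing pair on a thick level of $G$, contradicting irreducibility of $G$.

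The most delicate step is the last one: translating stabilization or boundary-stabilization of the amalgamation $H_i$ into concrete weak reducing structure on the sub-GHS $G(M_i')$. Because amalgamation can produce weakly reducible Heegaard splittings even when every thick level of the underlying GHS is strongly irreducible, disk pairs on $H_i$ need not correspond to a weak reducing pair at any single thick level of $G(M_i')$, and one must carefully analyze where such disk pairs lie relative to the tubing structure used in amalgamation. I expect this to require the SOG-level machinery from \cite{gordon}, by extending $\{G\}$ into a SOG witnessing the alleged stabilization or boundary-stabilization and then reducing to obtain the contradiction.
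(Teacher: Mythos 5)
Your overall framework closely matches the paper's proof: maximally weak-reduce the GHS whose only thick level is $H$, use Theorem~\ref{t:Barrier} to force thin levels isotopic to $F$, cut along these to obtain $M_1'$, $M_2'$, and intermediate product pieces, and identify the product contribution as a Type~II splitting using~\cite{st:93}. The exclusion of the Type~I case and the unstabilizedness of the $M_i$-pieces are also handled the same way as in the paper.

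However, your treatment of boundary-unstabilization has a genuine gap, and it is not merely a ``delicate step" as you suggest --- the approach is wrong. You set $H_i := \amlg{G(M_i')}$ and then try to \emph{prove} that $H_i$ is boundary-unstabilized by deriving a contradiction. But this statement is false in general: $\amlg{G(M_i')}$ may well be boundary-stabilized along $F$, and that is precisely why the conclusion of the theorem allows for an extra Type~II piece. The paper does not attempt the contradiction you describe; instead it sets $G_i := \amlg{H(M_i')}$ and then says, in effect, ``if $G_i$ happens to be boundary-stabilized, boundary-destabilize it to obtain an unstabilized, boundary-unstabilized $H_i$, and absorb the difference as another splitting of $F\times I$." All the $F\times I$ factors --- those coming from the internal product regions you call $P_j$ and those coming from boundary-destabilizing $G_1$, $G_2$ --- are then amalgamated into a single $H_F$, which is shown to be Type~II. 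Your proposed fix of ``extending $\{G\}$ into a SOG and reducing" would not establish the false claim either; the correct move is the further decomposition, not a contradiction. As you yourself note, a stabilizing or boundary-stabilizing disk pair on the amalgamated surface $\amlg{G(M_i')}$ need not localize to any thick level of $G(M_i')$, so the step where you try to produce a weak reducing pair on a thick level of $G$, or a new thin level isotopic to $F$, does not go through.

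A smaller point: your justification that each $H_{P,j}$ is unstabilized --- ``its sub-GHS inherits the absence of weak reducing pairs from $G$" --- is not quite the right implication. The paper's cleaner version is the contrapositive: if the $F\times I$ piece were stabilized, then by amalgamation $H$ itself would be stabilized, contradicting the hypothesis.
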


Here the terms ``sufficiently complicated" and ``low genus" mean that the distance of $\phi$ is high enough so that by Theorem \ref{t:Barrier} the surface $F$ becomes a $g$-barrier surface, where $g=\mbox{genus}(H)$.

\begin{proof}
Let $H_*$ be an unstabilized Heegaard splitting of $M$ whose genus is at most $g$. Let $H$ be a GHS obtained from the GHS whose only thick level is $H_*$ by a maximal sequence of weak reductions (Figure \ref{f:AmalgamExists}(b)).  Since $H_*$ was unstabilized, it follows from Lemma \ref{l:AmalgGenus} that $\amlg{H}=H_*$. 

        \begin{figure}[htbp]
        \psfrag{H}{$H_*$}
        \psfrag{F}{$F$}
        \psfrag{a}{$G_1$}
        \psfrag{b}{$G_2$}
        \psfrag{g}{$H_1$}
        \psfrag{G}{$H_2$}
        \psfrag{h}{$H_F$}
        \psfrag{1}{(a)}
        \psfrag{2}{(b)}
        \psfrag{3}{(c)}
        \psfrag{4}{(d)}
        \psfrag{5}{(e)}
        \vspace{0 in}
        \begin{center}
       \includegraphics[width=3.5 in]{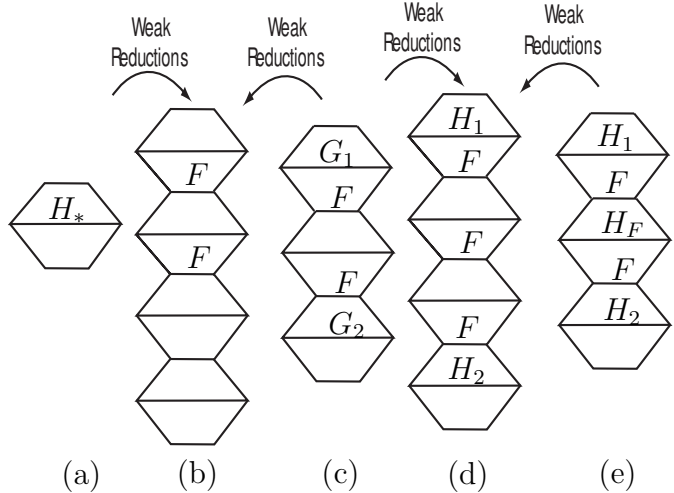}
       \caption{The GHSs of the proof of Theorem \ref{t:AmalgamationExists}.}
        \label{f:AmalgamExists}
        \end{center}
        \end{figure}

By Theorem \ref{t:Barrier}, $F$ is a $g$-barrier surface. Hence, $F$ is isotopic to at least one thin level of $H$. Now cut $M$ along all thin levels isotopic to $F$. The result is manifolds $M_1'$ and $M_2'$ homeomorphic to $M_1$ and $M_2$, and possibly several manifolds homeomorphic to $F \times I$. The Heegaard splitting $H_*=\amlg{H}$ is thus an amalgamation of the splittings $G_1=\amlg{H(M_1')}$ and $G_2=\amlg{H(M_2')}$, and possibly a Heegaard splitting of $F \times I$ (Figure \ref{f:AmalgamExists}(c)).

Since $H_*$ is unstabilized, it follows that both $G_1$ and $G_2$ are unstabilized. Now suppose that $G_i$ is boundary-stabilized. Then $G_i$ is the amalgamation of an unstabilized, boundary-unstabilized splitting $H_i$ of $M_i'$, and a splitting of $F \times I$. If $G_i$ was boundary-unstabilized to begin with, then let $H_i=G_i$. Thus, $H_*$ is an amalgamation of $H_1$, $H_2$, and possibly multiple splittings of $F \times I$ (Figure \ref{f:AmalgamExists}(d)), which can again be amalgamated to a single splitting $H_F$ of $F \times I$ (Figure \ref{f:AmalgamExists}(e)). 

By \cite{st:93} $H_F$ is a stabilization of either a copy of $F$ (i.e. a stabilization of a Type I splitting), or of two copies of $F$ connected by a vertical tube (i.e. a stabilization of a Type II splitting). However, our assumption that $H_*$ was unstabilized implies $H_F$ is unstabilized. Furthermore, as $H_F$ comes from amalgamating non-trivial splittings of $F \times I$,  it will not be a Type I splitting. We conclude that 
the only possibility is that $H_F$ is a Type II splitting of $F \times I$. 
\end{proof}

\section{Isotopic Heegaard splittings in amalgamated 3-manifolds.}
\label{s:Isotopy}

In Theorem \ref{t:AmalgamationExists} we showed that when $\phi$ is sufficiently complicated then any low genus, unstabilized Heegaard splitting $H$ of $M_1 \cup _\phi M_2$ is an amalgamation of unstabilized, boundary unstabilized splittings $H_1$ and $H_2$ of $M_1$ and $M_2$, and possibly a Type II splitting of $\bdy M_1 \times I$. In the next theorem we show that when there is no Type II splitting in this decomposition, then $H_1$ and $H_2$ are completely determined by $H$.

\begin{thm}
\label{t:HighGenusGordonIsotopy}
Let $M_1$ and $M_2$ be compact, orientable, irreducible 3-manifolds with incompressible boundary, neither of which is an $I$-bundle. Let $M$ denote the manifold obtained by gluing some component $F$ of $\bdy M_1$ to some component of $\bdy M_2$ by some homeomorphism $\phi$. Suppose $\phi$ is sufficiently complicated, and some low genus Heegaard splitting  $H$ of $M$ can be expressed as an amalgamation of unstabilized, boundary-unstabilized splittings of $M_1$ and $M_2$. Then this expression is unique.
\end{thm}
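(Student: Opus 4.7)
The plan is to mimic the strategy of the proof of Theorem \ref{t:HigherGenusGordon}: build a SOG connecting the two hypothetical amalgamation expressions of $H$, apply SOG reduction to obtain an irreducible representative, then use barrier surfaces to force the $M_1$- and $M_2$-side splittings to agree throughout.

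Concretely, suppose $H = \amlg{G} = \amlg{G'}$ where $G = \{\{H_1, H_2\}, \{F\}\}$ and $G' = \{\{H_1', H_2'\}, \{F\}\}$, with $H_i, H_i'$ unstabilized and boundary-unstabilized Heegaard splittings of $M_i$. Since both $G$ and $G'$ have amalgamation $H$, each is obtained from the single-thick-level GHS $\{H\}$ by a sequence of weak reductions that are not destabilizations. First I would splice these two sequences together to form a SOG $\Gamma$ with $\{H\}$ at the center, $G$ to its left, and $G'$ to its right, and then extend each end by a maximal sequence of weak reductions to obtain endpoints $\Gamma^1$ and $\Gamma^n$ admitting no further weak reductions. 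The only maximal GHS of $\Gamma$ is $\{H\}$, so $\mbox{genus}(\Gamma) = g := \mbox{genus}(H)$.

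Next, applying a maximal sequence of SOG reductions to $\Gamma$ yields an irreducible SOG $\Lambda = (\Lambda^1, \ldots, \Lambda^n)$ with $\Lambda^1 = \Gamma^1$ and $\Lambda^n = \Gamma^n$. By Lemma \ref{l:GenusGoesDown}, $\mbox{genus}(\Lambda) \le g$, and because $\phi$ is sufficiently complicated, Theorem \ref{t:Barrier} implies that $F$ is isotopic to a thin level of every $\Lambda^i$. The goal is then to show that each $\Lambda^i$ has a unique $F$-thin level, which partitions $M$ into $M_1^i \cong M_1$ and $M_2^i \cong M_2$, and that the $M_1$-side amalgamation $H_1^i := \amlg{\Lambda^i(M_1^i)}$ is constant; likewise for the $M_2$-side. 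Since $H_1^1 = H_1$ (inherited from $G$, as in the setup of Theorem \ref{t:HigherGenusGordon}) and $H_1^n = H_1'$ (inherited from $G'$), constancy forces $H_1 = H_1'$; the symmetric statement gives $H_2 = H_2'$.

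To establish constancy, I would run the argument at the end of the proof of Theorem \ref{t:HigherGenusGordon}. Let $m$ be the largest index such that $\Lambda^i$ has a unique $F$-thin level for all $i \le m$. For $i \le m$, each weak reduction is confined to one side; then Lemma \ref{l:AmalgGenus}, combined with the unstabilized hypotheses and the first-destabilization bound of Lemma 8.12 of \cite{gordon}, yields $H_1^i \equiv H_1$ and $H_2^i \equiv H_2$. The hard part will be showing $m = n$: if $m < n$, then $\Lambda^{m+1}$ acquires a second $F$-thin level, and the two cobound a product region $P \subset M_1^m$ (say). The amalgamation of $\Lambda^{m+1}(P)$ is a Heegaard splitting of $F \times I$ that must be unstabilized (else $H_1$ would be stabilized), so by \cite{st:93} it is either a parallel copy of $F$ (impossible, since it would be removed by the weak-reduction cleanup) or a Type II splitting, which would boundary-stabilize $H_1$ and contradict the boundary-unstabilized hypothesis. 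A symmetric argument running from $\Lambda^n$ completes the proof.
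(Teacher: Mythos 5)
Your proposal is correct and follows essentially the same route as the paper's proof: the same SOG through the single-thick-level GHS $\{H\}$, SOG reduction to an irreducible $\Lambda$, the $g$-barrier surface $F$, uniqueness of the $F$-thin level via the product-region/Type II argument from Theorem \ref{t:HigherGenusGordon}, and Lemma \ref{l:AmalgGenus} to propagate the side amalgamations from $\Lambda^1$ to $\Lambda^n$. The only cosmetic difference is that the paper rules out destabilizations in $\Lambda$ wholesale by first invoking Theorem \ref{t:HigherGenusGordon} to conclude $H$ (and hence every GHS in the sequence) is unstabilized, rather than re-running the Lemma 8.12 first-destabilization argument as you do.
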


As in Theorem \ref{t:AmalgamationExists}, the terms ``sufficiently complicated" and ``low genus" mean that the distance of $\phi$ is high enough so that by Theorem \ref{t:Barrier} the surface $F$ becomes a $g$-barrier surface, where $g=\mbox{genus}(H)$.

\begin{proof}
Suppose $H$ can be expressed as an amalgamation of unstabilized, boundary-unstabilized splittings $H_1$ and $H_2$ of $M_1$ and $M_2$. Suppose also $H$ can be expressed as an amalgamation of unstabilized, boundary-unstabilized splittings $G_1$ and $G_2$ of $M_1$ and $M_2$. 

Let ${\bf \Gamma}$ be the SOG depicted in Figure \ref{f:InitialSOGisotopy}. The third GHS in the figure is the one whose only thick level is $H$. The second GHS pictured is the GHS whose thick levels are $H_1$ and $H_2$. The first GHS in the figure is obtained from this one by a maximal sequence of weak reductions. The fourth GHS is the one whose thick levels are $G_1$ and $G_2$. Finally, the last GHS is obtained from the fourth by a maximal sequence weak reductions.

        \begin{figure}[htbp]
        \psfrag{X}{$H_2$}
        \psfrag{H}{$H_1$}
        \psfrag{x}{$G_2$}
        \psfrag{h}{$G_1$}
        \psfrag{G}{$H$}
        \psfrag{F}{$F$}
        \vspace{0 in}
        \begin{center}
       \includegraphics[width=3.5 in]{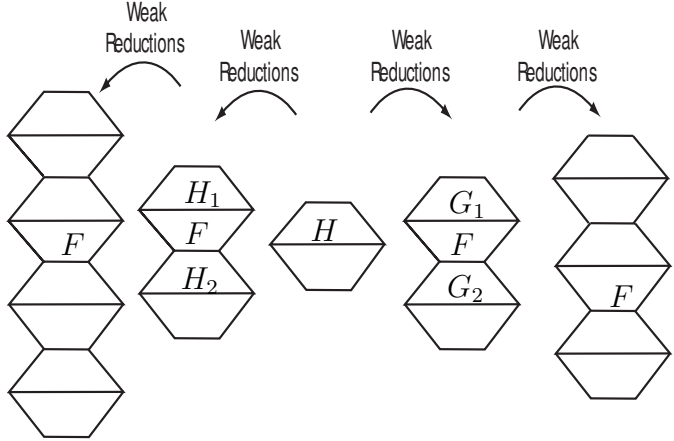}
       \caption{The initial SOG, $\bf \Gamma$.}
        \label{f:InitialSOGisotopy}
        \end{center}
        \end{figure}

Now let ${\bf \Lambda}=\{\Lambda ^i\}_{i=1}^n$ be the SOG obtained from ${\bf \Gamma}$ by a maximal sequence of SOG reductions. When the first and last GHS of a SOG admit no weak reductions, then they remain unaffected by SOG reduction. Hence, $\Lambda^1$ is the first element of ${\bf \Gamma}$ and $\Lambda ^n$ is the last element of ${\bf \Gamma}$.

Note that every GHS of $\bf \Gamma$ is obtained from $H$ by a sequence of weak reductions. By Theorem \ref{t:HigherGenusGordon} the splitting $H$ is unstabilized, and thus every GHS of $\bf \Gamma$ is unstabilized. Furthermore, every GHS of $\bf \Lambda$ is obtained from GHSs of $\bf \Gamma$ by weak reductions. Hence, every GHS of $\bf \Lambda$ is unstabilized. It follows that there are no destabilizations in $\bf \Lambda$. 

Since $F$ is a $g$-barrier surface, it is isotopic to a thin level of every GHS of $\bf \Lambda$. If, for some $i$, we assume the surface $F$ is isotopic to two elements of $\thin{\Lambda^i}$, then the argument given in the proof of Theorem \ref{t:HigherGenusGordon} provides a contradiction. (This is where we use the assumption that $H_1$ and $H_2$ are not boundary-stabilized.)

We conclude, then, that for each $i$ either $\Lambda^i$ or $\Lambda ^{i+1}$ is obtained from the other by a weak reduction that is not a destabilization. Furthermore, since for all $i$ the surface $F$ is isotopic to a unique thin level of $\Lambda ^i$, it follows that for each $i$,  $M_1(\Lambda^i)=M_1(\Lambda ^{i+1})$, or either $M_1(\Lambda^i)$ or $M_1(\Lambda ^{i+1})$ is obtained from the other by a weak reduction that is not a destabilization. It thus follows from Lemma \ref{l:AmalgGenus} that for each $i$ the surface $\amlg{M_1(\Lambda ^i)}$ is the same (up to isotopy). But $\amlg{M_1(\Lambda ^1)}=H_1$ and $\amlg{M_1(\Lambda ^n)}=G_1$. Hence, $H_1$ is isotopic to $G_1$. A symmetric argument shows $H_2$ must be isotopic to $G_2$, completing the proof. 
\end{proof}

\bibliographystyle{alpha}

\begin{thebibliography}{Hem01}

\bibitem[Baca]{barrier}
D.~Bachman.
\newblock Barriers to topologically minimal surfaces.
\newblock Available at {\tt http://arxiv.org/abs/0903.1692}.

\bibitem[Bacb]{StabilizationResults}
D.~Bachman.
\newblock Heegaard splittings of sufficiently complicated 3-manifolds {I}:
  {S}tabilization.
\newblock Available at {\tt http://arxiv.org/abs/0903.1695}.

\bibitem[Bac08]{gordon}
D.~Bachman.
\newblock Connected {S}ums of {U}nstabilized {H}eegaard {S}plittings are
  {U}nstabilized.
\newblock {\em Geometry \& Topology}, (12):2327--2378, 2008.

\bibitem[CG87]{cg:87}
A.~J. Casson and C.~McA. Gordon.
\newblock Reducing {H}eegaard splittings.
\newblock {\em Topology and its Applications}, 27:275--283, 1987.

\bibitem[Hem01]{hempel:01}
J.~Hempel.
\newblock 3-manifolds as viewed from the curve complex.
\newblock {\em Topology}, 40:631--657, 2001.

\bibitem[Kir97]{kirby:97}
Rob Kirby.
\newblock Problems in low-dimensional topology.
\newblock In {\em Geometric topology (Athens, GA, 1993)}, volume~2 of {\em
  AMS/IP Stud. Adv. Math.}, pages 35--473. Amer. Math. Soc., Providence, RI,
  1997.

\bibitem[Lac04]{lackenby:04}
Marc Lackenby.
\newblock The {H}eegaard genus of amalgamated 3-manifolds.
\newblock {\em Geom. Dedicata}, 109:139--145, 2004.

\bibitem[Li]{li:08}
T.~Li.
\newblock Heegaard surfaces and the distance of amalgamation.
\newblock Preprint.

\bibitem[LM99]{moriah:99}
Martin Lustig and Yoav Moriah.
\newblock Closed incompressible surfaces in complements of wide knots and
  links.
\newblock {\em Topology Appl.}, 92(1):1--13, 1999.

\bibitem[Mor]{moriah}
Y.~Moriah.
\newblock Heegaard splittings of knot exteriors.
\newblock Preprint. Available at {\tt http://arxiv.org/abs/math/0608137}.

\bibitem[Mor02]{moriah:02}
Yoav Moriah.
\newblock On boundary primitive manifolds and a theorem of {C}asson-{G}ordon.
\newblock {\em Topology Appl.}, 125(3):571--579, 2002.

\bibitem[MS04]{ms:04}
Yoav Moriah and Eric Sedgwick.
\newblock Closed essential surfaces and weakly reducible {H}eegaard splittings
  in manifolds with boundary.
\newblock {\em J. Knot Theory Ramifications}, 13(6):829--843, 2004.

\bibitem[Sch93]{schultens:93}
Jennifer Schultens.
\newblock The classification of {H}eegaard splittings for (compact orientable
  surface){$\,\times\, S\sp 1$}.
\newblock {\em Proc. London Math. Soc. (3)}, 67(2):425--448, 1993.

\bibitem[Sch96]{schultens:96}
J.~Schultens.
\newblock The stabilization problem for {H}eegaard splittings of {S}eifert
  fibered spaces.
\newblock {\em Topology and its Applications}, 73:133 -- 139, 1996.

\bibitem[Sed97]{sedgwick:97}
E.~Sedgwick.
\newblock An infinite collection of {H}eegaard splttings that are equivalent
  after one stabilization.
\newblock {\em Math. Ann.}, 308:65--72, 1997.

\bibitem[Sed01]{sedgwick:01}
Eric Sedgwick.
\newblock Genus two 3-manifolds are built from handle number one pieces.
\newblock {\em Algebr. Geom. Topol.}, 1:763--790 (electronic), 2001.

\bibitem[Sou]{souto}
Juan Souto.
\newblock Distances in the curve complex and {H}eegaard genus.
\newblock Preprint. Available at {\tt www.picard.ups-tlse.fr/\%7Esouto/
  Heeg-genus.pdf}.

\bibitem[SQ]{ScharlemannQiu}
M.~Scharlemann and R.~Qiu.
\newblock A proof of the {G}ordon {C}onjecture.
\newblock Preprint. Available at {\tt http://arxiv.org/abs/0801.4581}.

\bibitem[ST93]{st:93}
Martin Scharlemann and Abigail Thompson.
\newblock Heegaard splittings of {$({\rm surface})\times I$} are standard.
\newblock {\em Math. Ann.}, 295(3):549--564, 1993.

\bibitem[ST94]{st:94}
M.~Scharlemann and A.~Thompson.
\newblock Thin position for 3-manifolds.
\newblock {\em A.M.S. Contemporary Math.}, 164:231--238, 1994.

\bibitem[SW07]{SchultensWeidmann}
Jennifer Schultens and Richard Weidmann.
\newblock Destabilizing amalgamated {H}eegaard splittings.
\newblock In {\em Workshop on {H}eegaard {S}plittings}, volume~12 of {\em Geom.
  Topol. Monogr.}, pages 319--334. Geom. Topol. Publ., Coventry, 2007.

\end{thebibliography}

\end{document}